\newcommand{\RN}[1]{\textup{\uppercase\expandafter{\romannumeral#1}}}
\numberwithin{equation}{section}
\newcommand\mc{\mathcal}
\newcommand\mb{\mathbb}
\crefname{equation}{}{}
\newtheorem{theorem}{Theorem}[section]
\newtheorem{lemma}[theorem]{Lemma}
\newtheorem{proposition}[theorem]{Proposition}
\newtheorem{corollary}[theorem]{Corollary}
\theoremstyle{definition}
\newtheorem{definition}[theorem]{Definition}
\theoremstyle{remark}
\newtheorem{remark}[theorem]{Remark}
\title{The Degeneracy Loci for Smooth Moduli of Sheaves}
\author{Yu Zhao}
\address{Beijing Institute of Technology, Haidian, Beijing, China}
\email{zy199402@live.com}
\begin{document}

\begin{abstract}
  Let $S$ be a smooth projective surface over $\mathbb{C}$. Under certain technical assumptions, we prove that the degeneracy locus of the universal sheaf over the moduli space of stable sheaves is either empty or an irreducible Cohen–Macaulay variety of the expected dimension; we also give a criterion for when the degeneracy locus is nonempty. This result generalizes the work of Bayer, Chen, and Jiang \cite{bayer2024brill} for the Hilbert scheme of points on surfaces.

  The above statement is a special case of a more general phenomenon: for a two-term complex of locally free sheaves, the geometry of the degeneracy locus is closely related to the geometry of Grassmannians.
\end{abstract}

\maketitle
\section{Introduction}

\subsection{The Brill-Noether theory on the Moduli Space of Stable Sheaves}
Given a pair $M,N$ of moduli spaces of sheaves or complexes over a smooth variety, a fundamental problem is to study the Brill–Noether loci in $M\times N$ determined by the dimension of the space of morphisms between the corresponding objects. When $N$ is a complex algebraic surface $S$ and $M\cong S^{[n]}$ is the Hilbert scheme of ideals $I\subset \mc{O}_S$ of colength $n$, the $l$-th Brill–Noether locus is defined as 
\begin{equation*}
  \{(I,x)\in S^{[n]}\times S\mid \dim \mathrm{Hom}(I,\mb{C}_x)\geq l+1\}, \quad l\in \mb{Z}_{\geq 0}.
\end{equation*}
which is the same as the \textbf{degeneracy locus} of the universal ideal sheaf 
\begin{equation*}
  \mathrm{BN}_{l,n}:=\{(I,x)\in S^{[n]}\times S\mid \mathrm{rank}(I|_x)\geq l+1\}
\end{equation*}
by Nakayama's lemma. In \cite{bayer2024brill}, Bayer, Chen, and Jiang proved that $\mathrm{BN}_{l,n}$ is an irreducible Cohen–Macaulay variety of dimension $2n+2-l(l+1)$ if $2n-l(1+l)\geq 0$, and is otherwise empty. The Hilbert scheme of points on a surface can be viewed as the moduli space of stable sheaves of rank $1$. In this paper, we generalize the above result to the moduli space of stable sheaves of all ranks $>1$.

We describe our main result through the degeneracy locus of the universal sheaf: given a coherent sheaf $\mathcal{F}$ over a variety $X$, the degeneracy locus $\overline{D}_{\mathcal{F},l}$ is defined as the closed subvariety
\begin{equation*}
  \overline{D}_{\mathcal{F},l} := \{ p \in X \mid \mathrm{rank}(\mathcal{F}|_p) \geq l \}, \quad l \geq 0,
\end{equation*}
We fix $c_{1} \in \mathrm{NS}(S)$ and an ample line bundle $H$. For any $n \in H^{4}(S, \mathbb{Z}) \cong \mathbb{Z}$, let $\mathcal{M}^{s}_{H}(r, c_{1}, n)$ be the moduli space of Gieseker $H$-stable coherent sheaves with rank $r$, first Chern class $c_{1}$, and second Chern class $n$. Let $\mc{U}_n$ be the universal sheaf over $\mc{M}_H^s(r,c_1,n)\times S$.

\begin{theorem}
  \label{theorem1.1}
  We consider the following assumptions, which hold for a generic ample divisor when the surface \( S \) is a del Pezzo, K3, or abelian surface:
  \begin{align}
    \label{assumA}
    \gcd(r, c_1 \cdot H) = 1, \qquad   K_S \cong \mathcal{O}_S \text{ or } K_S \cdot H < 0,
  \end{align}
where $K_S$ is the canonical line bundle of $S$. If \cref{assumA} holds, then we have
\begin{enumerate}
  \item There exists $n_0\in \mb{Z}$ such that for any $n\in \mb{Z}$, $l\in \mb{Z}_{\geq 0}$, $\overline{D}_{\mathcal{U}_{n},l}$ is nonempty if and only if 
  \begin{equation*}
    g(n,l):=n-n_0- \frac{l(l-r)-t(t-r)}{2r}\geq 0.
\end{equation*}
where $t$ is the remainder of $l$ divided by $r$. In particular, $\mc{M}_H^s(r,c_1,n)$ is nonempty if and only if $n\geq n_0$. 
\item Let $\beta := (1 - r) c_1^2 - (r^2 - 1) \chi(\mathcal{O}_S) + h^1(\mathcal{O}_S)$. If $g(n,l)\geq 0$, the degeneracy locus $\overline{D}_{\mathcal{U}_{n},l}$ is an irreducible Cohen-Macaulay variety of dimension 
  \begin{equation*}
    \mathrm{vdim}_{n,l} := \beta + 2rn -\max\{0, l(l-r)\}+2,
  \end{equation*}   
\end{enumerate}
\end{theorem}
\begin{remark}
  \cref{theorem1.1} (and our proof) can be generalized to the case $r=1$ with $n_0=0$. Thus $g(n,l)=n-\frac{l(l-1)}{2}$, which is exactly the main theorem of \cite{bayer2024brill}. When $r>1$, Yoshioka \cite{yoshioka1999irreducibility,yoshioka2001moduli} proved that $n_0=\lfloor\tfrac{-\beta}{2r}\rfloor$ for a K3 surface with a generic ample divisor, and $n_0=\lfloor\tfrac{2-\beta}{2r}\rfloor$ for an abelian surface. When $S=\mathbb{P}^2$, the value of $n_0$ follows from Theorem B of \cite{ASENS_1985_4_18_2_193_0}. An upper bound for $n_0$ was given by Li–Qin \cite{Li-Qin}.

  We have an argument similar to \cref{theorem1.1} for the moduli space of framed sheaves on $\mathbb{P}^{2}$, where $n_0=0$. It is summarized in \cref{sec:appendixA}.
\end{remark}

\subsection{The Degeneracy Locus and the Grassmannian}
Different from \cite{bayer2024brill}, we prove a much more general argument and apply it to the universal sheaf. Let $X$ be an irreducible, locally complete intersection variety over an algebraically closed field, and consider a morphism of locally free sheaves over $X$
\begin{equation*}
  V \xrightarrow{\sigma} W \text{ with its dual } \sigma^{\vee}: W^{\vee} \to V^{\vee}.
\end{equation*}
Given any $l \geq 0$, we consider the $l$-th degeneracy locus $\overline{D}_{\sigma,l} := \overline{D}_{\mathrm{coker}(\sigma),l}$ as the closed subvariety consisting of points $x$ where $\mathrm{rank}(\mathrm{coker}(\sigma|_x)) \leq l$. We also consider the Grassmannian (in Grothendieck's notation) $\mathrm{Gr}_X(\sigma,l):=\mathrm{Gr}_{X}(\mathrm{coker}(\sigma),l)$, which parametrizes the quotients of $\mathrm{coker}(\sigma)$ by locally free sheaves of rank $l$. Let $e:=\mathrm{rank}(W)-\mathrm{rank}(V)$. In this paper, we prove that 
\begin{theorem}
  \label{thm:A}
  For any $l\geq 0$, either $\overline{D}_{\sigma,l}$ (resp. $\mathrm{Gr}_X(\sigma,l)$, $\mathrm{Gr}_X(\sigma^{\vee},l)$) is empty, or \cref{eq:1} (resp. \cref{eq:2}, \cref{eq:3}) holds:
  \begin{align}
    \label{eq:1} 
    \dim(\overline{D}_{\sigma,l}) \geq \dim(X)-\max\{0, l(l - e)\},  \\
\label{eq:2}
  \dim(\mathrm{Gr}_{X}(\sigma,l)) \geq \dim(X) - l(l - e), \\
  \label{eq:3}
  \dim(\mathrm{Gr}_X(\sigma^\vee,l))\geq \dim(X)-l(l+e). 
  \end{align}
  
  If one of the above three inequalities is an equality for all $l\geq 0$, the other two inequalities are equalities for all $l\geq 0$, and moreover
    \begin{enumerate}
    \item  For any $l\geq 0$, the variety $\overline{D}_{\sigma,l}$ is empty (resp. irreducible) if and only if $\mathrm{Gr}_X(\sigma,l)$ is empty (resp. irreducible). 
    \item The degeneracy locus $\overline{D}_{\sigma,l}$  is irreducible and Cohen-Macaulay for all $l\geq 0$.
    \item For any $l_-,l_+\geq 0$, the incidence variety 
\begin{equation*}
  \mathrm{Inc}_{X}(\sigma, l_{-}, l_{+}) := \mathrm{Gr}_{X}(\mathrm{coker}(\sigma^{\vee}), l_{-})\times_X\mathrm{Gr}_{X}(\mathrm{coker}(\sigma), l_{+}).
\end{equation*}
 is a locally complete intersection variety of dimension
\begin{equation*}
 d_{l_-,l_+}=\mathrm{dim}(X)-l_-(l_-+e)-l_+(l_+-e)+l_-l_+.
\end{equation*}
  \end{enumerate}
\end{theorem}
  
\begin{remark}
  In the language of \textbf{derived algebraic geometry}, the Grassmannian $\mathrm{Gr}_X(\sigma,d)$ and the incidence variety $\mathrm{Inc}_X(\sigma,l_-,l_+)$ can be endowed with a derived structure, as in \cite{jiang2022grassmanian}. Thus, in the language of derived algebraic geometry, \cref{thm:A} states that if $\mathrm{Gr}_X(\sigma,l)$ is classical for all $l\geq 0$, then $\mathrm{Gr}_X(\sigma^\vee,l)$ and $\mathrm{Inc}_X(\sigma,l_-,l_+)$ are also classical for all $l,l_-,l_+\geq 0$. Our proof of \cref{thm:A} also relies on computing the virtual dimensions of these derived schemes. 

  We note a difference in notation compared to \cite{jiang2022grassmanian}, where $\mathrm{Gr}_X(\sigma^{\vee},l)$ is denoted by $\mathrm{Gr}_X(\sigma^{\vee}[1],l)$. The reason is that we regard $\sigma$ as a morphism of locally free sheaves instead of a two-term complex. \cref{thm:A} can also be generalized to the case where $\sigma$ is a two-term complex of locally free sheaves (locally on $X$) without difficulty.
\end{remark}

According to \cite{jiang2022grassmanian}, if $X$ is a quasi-smooth variety over $\mathbb{C}$, then $\mathrm{Inc}_{X}(\sigma, l_{-}, l_{+})$ is also quasi-smooth, which induces a perfect obstruction theory in the sense of Li-Tian \cite{MR1467172} or Behrend-Fantechi \cite{MR1437495}, thereby inducing a virtual fundamental class 
\begin{equation*}
  [\mathrm{Inc}_{X}(\sigma, l_{-}, l_{+})]^{\mathrm{vir}} \in \mathrm{CH}_{d_{l_-,l_+}}(\mathrm{Inc}_{X}(\sigma, l_{-}, l_{+})).
\end{equation*}
Now we assume that all the inequalities in \cref{thm:A} are equalities. Then any incidence variety $\mathrm{Inc}_{X}(\sigma, l_{-}, l_{+})$ is a locally complete intersection variety of dimension $d_0$, thereby recovering an earlier argument by the author in \cite{zhao2024hilbertschemesblowingups}:

\begin{proposition}[Proposition 4.6 of \cite{zhao2024hilbertschemesblowingups}]
\label{prop:prop}
  We have
  \begin{equation*}
    [\mathrm{Inc}_{X}(\sigma, l_{-}, l_{+})]^{\mathrm{vir}} = [\mathrm{Inc}_{X}(\sigma, l_{-}, l_{+})] \in \mathrm{CH}_{d_{0}}(\mathrm{Inc}_{X}(\sigma, l_{-}, l_{+})).
  \end{equation*}
\end{proposition}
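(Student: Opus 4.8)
The plan is to isolate the single geometric input that makes the virtual class collapse to the fundamental cycle, namely that $\mathrm{Inc}_X(\sigma, d_+, d_-)$ has \emph{trivial} derived structure, and then to invoke the general fact that the virtual fundamental class of a classical quasi-smooth derived scheme is its ordinary fundamental cycle. Triviality of the derived structure is provided by \cref{thm:C}: in the geometric setting of \cite{zhao2024hilbertschemesblowingups} the complex $\sigma$ has perfect degeneracy loci — the derived Grassmannians $Gr_X(\sigma, d_+)$ and $Gr_X(\sigma^{\vee}[1], d_-)$ being (underived) moduli spaces of perverse coherent sheaves, smooth of the expected dimension by Nakajima and Yoshioka — so by the equivalence $(1)\Leftrightarrow(2)$ of \cref{thm:C} every $\mathrm{Inc}_X(\sigma, d_+, d_-)$ is classical. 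By Corollary 2.11 of \cite{Arinkin2015}, a classical quasi-smooth derived scheme has the expected dimension and is a local complete intersection; thus $\mathrm{Inc}_X(\sigma, d_+, d_-) = \pi_0(\mathrm{Inc}_X(\sigma, d_+, d_-))$ is lci of dimension exactly $d_0$.

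It then remains to establish the general statement: if $Y$ is a quasi-smooth derived scheme that is classical, the virtual fundamental class determined by the perfect obstruction theory $\mathbb{L}_{Y}|_{\pi_0(Y)} \to \mathbb{L}_{\pi_0(Y)}$ — equivalently, the class built from the quasi-smooth structure as in \cite{ciocan-fontanine07:virtual} — equals $[\pi_0(Y)]$. When $Y$ is classical this obstruction theory is the identity of $\mathbb{L}_Y$, a perfect complex of Tor-amplitude $[-1,0]$ since $Y$ is lci; hence the intrinsic normal cone $\mathfrak{C}_Y$ fills out the entire vector-bundle stack $\mathfrak{N}_Y = h^{1}/h^{0}(\mathbb{L}_Y^{\vee})$ — precisely the lci condition — and the defining Gysin pullback $0^{!}_{\mathfrak{N}_Y}[\mathfrak{C}_Y] = 0^{!}_{\mathfrak{N}_Y}[\mathfrak{N}_Y]$ returns $[Y]$. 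This is the local complete intersection case of the Behrend--Fantechi formalism \cite{MR1437495}; alternatively, one may use the derived-geometric construction of the virtual class, which tautologically reduces to $[\pi_0(Y)]$ when $Y$ is already classical. Applying this with $Y = \mathrm{Inc}_X(\sigma, d_+, d_-)$ gives the asserted identity in $CH_{d_0}(\mathrm{Inc}_X(\sigma, d_+, d_-))$.

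The real work lies in the bookkeeping connecting the two steps, and this is where I expect the only genuine difficulty. One must check that the perfect obstruction theory used in \cite{zhao2024hilbertschemesblowingups, ciocan-fontanine07:virtual} to define $[\mathrm{Inc}_X(\sigma, d_+, d_-)]^{\mathrm{vir}}$ — coming from the quasi-smoothness of $r_+$ and of the derived Grassmannians over $X$ established in \cite{jiang2022grassmanian} — is canonically the restriction of $\mathbb{L}_{\mathrm{Inc}_X(\sigma, d_+, d_-)}$, so that the general statement above applies to the class as it is actually defined; this compatibility is implicit in \cite{ciocan-fontanine07:virtual} but should be spelled out. One should also verify that the rank of this obstruction theory is exactly the number $d_0 = \dim(X) + (1 - d_+)d_+ - (d_- + 1)d_- + d_- d_+$ appearing above, so that ``expected dimension $=d_0$'' and ``lci of dimension $d_0$'' refer to the same integer with no hidden shift. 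Finally, being classical yields an lci but a priori not a reduced scheme; this is harmless because $[\mathrm{Inc}_X(\sigma, d_+, d_-)]$ denotes the fundamental cycle taken with generic multiplicities and the computation above is insensitive to them, and in any event the ``perfect'' clause of \cref{thm:C} supplies irreducibility.
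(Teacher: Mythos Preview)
Your proposal is correct and follows essentially the same approach as the paper: the paper's argument is the single sentence that by \cref{thm:C} the incidence varieties are locally complete intersection varieties of expected dimension, whence the virtual class equals the fundamental class. You have supplied the details the paper leaves implicit---the reduction to classicality via \cref{thm:C}, the identification of the induced obstruction theory with the identity on $\mathbb{L}_Y$, and the standard lci case of Behrend--Fantechi---but the skeleton is identical.
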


\subsection{Notations} In this paper, we assume that the dimension of the empty set is an arbitrary integer, so that any equalities and inequalities involving the dimension of a variety continue to hold even when the variety is empty.

\subsection{Acknowledgement}  While preparing this paper, the author was reminded by Qingyuan Jiang that a similar argument to \cref{thm:A} was given in his note \cite{JiangNotes}. The author would like to thank Qingyuan Jiang for many helpful discussions.

\section{Degeneracy Theory for Two-term Complexes}
\label{sec2}
In this section, let $X$ be a nonempty locally complete intersection variety over an algebraically closed field, and let $\sigma:V\to W$ be a morphism of locally free sheaves over $X$. Let $e:=\mathrm{rank}(W)-\mathrm{rank}(V)$, and let $\sigma^{\vee}:W^{\vee}\to V^{\vee}$ be the dual of $\sigma$.

\begin{definition}[Section 14.4 of \cite{MR0732620}]
For any integer $l$, we define the $l$-th degeneracy locus as
\begin{equation*}
    \overline{D}_{\sigma,l} := \{ x\in X \mid \mathrm{rank}(\mathrm{coker}(\sigma|_{k(x)})) \geq l \},
\end{equation*}
which is a closed subvariety of $X$, where $k(x)$ denotes the residue field of $x$. We also define the locally closed subvariety
\begin{equation*}
    D_{\sigma,l} := \overline{D}_{\sigma,l} - \overline{D}_{\sigma,l+1} = \{ x \mid \mathrm{rank}(\mathrm{coker}(\sigma|_{k(x)})) = l \}.
\end{equation*}
\end{definition}
By the definition of degeneracy loci, we have $\overline{D}_{\sigma,k} = \overline{D}_{\sigma^{\vee},k-e}$ and 
\begin{equation*}
    \overline{D}_{\sigma,k} \cong X \quad \mathrm{if} \quad k \leq \max\{0, e\}, \quad D_{\sigma,k} \cong \emptyset \quad \mathrm{if} \quad k < \max \{0, e\}.
\end{equation*}

\begin{proposition}[Theorem 14.4 (b)(c) of \cite{MR0732620}.]
\label{prop:B4}
For any integer $k\geq \max\{0,e\}$, we have
\begin{equation*}
    \dim(\overline{D}_{\sigma,k})\geq \dim(X)-k(k-e).
\end{equation*}
In particular, for any $k\geq \max\{0,e\}$, if the  inequality is an equality, then $\overline{D}_{\sigma,k}$ (resp. $D_{\sigma,k}$) is Cohen-Macaulay (resp. a locally complete intersection variety).
\end{proposition}

\begin{lemma}
\label{lemB8}
For any integer $l\geq 0$, we have
\begin{equation}
  \label{eq:88}
    \dim(\mathrm{Gr}_{X}(\sigma,l)) \geq \dim(X) - l(l - e), \quad \dim(\mathrm{Gr}_X(\sigma^\vee,l))\geq \dim(X)-l(l+e).
\end{equation}
Moreover, the first inequality of \cref{eq:88} is an equality for all $l\geq 0$ if and only if the second inequality is an equality for all $l\geq 0$, which is also equivalent to the condition
\begin{equation}
  \label{eq:89}
    \dim(\overline{D}_{\sigma,l})=\dim(X)-l(l - e), \text{ for all } l\geq \max\{0,e\}.
\end{equation}
If the above equivalent conditions hold, for all $l\geq 0$, $\overline{D}_{\sigma,l}$ is empty (resp. irreducible) if and only if $\mathrm{Gr}_X(\sigma,l)$ is empty (resp. irreducible). 
\end{lemma}
\begin{proof}
  We only prove the inequalities about $\mathrm{Gr}_X(\sigma,l)$, as the rank of $\sigma^{\vee}$ is $-e$ and we can apply the same argument to $\sigma^{\vee}$. By \cite{jiang2022grassmanian}, $\mathrm{Gr}_X(\sigma,l)$ is endowed with the structure of a quasi-smooth derived scheme and its virtual dimension is $\dim(X)+l(e-l)$. As the dimension of a quasi-smooth scheme is always greater than or equal to its virtual dimension, we have \cref{eq:88}. 

  Now assume that the first inequality of \cref{eq:88} is an equality for all $l\geq 0$. Let $pr_{\sigma,l}$ be the projection from $\mathrm{Gr}_X(\sigma,l)$ to $X$. Then the fiber over $D_{\sigma,k}$ is a $\mathrm{Gr}(k,l)$-bundle; in particular, over $D_{\sigma,l}$ the fiber is $\mathrm{Gr}(l,l)$ (a point). Thus, we have
  \begin{equation*}
    \dim(D_{\sigma,l})\leq \dim(X)-l(l-e)
  \end{equation*}
  for all $l\geq \max\{0,e\}$ if it is not empty. 
  As $\dim(X)-l(l-e)$ is a strictly decreasing function of $l$ when $l\geq \max\{0,e\}$, we have \cref{eq:89}. 

Next we assume that \cref{eq:89} holds for all $l\geq 0$. We notice that the fiber of $D_{\sigma,k}$ is empty when $k<\max\{l,e\}$, and has dimension 
  \begin{equation*}
    \dim(X)-k(k-e)+l(k-l)
  \end{equation*}
  which is a strictly decreasing function of $k$ when $k\geq \max\{l,e\}$. Hence it is achieved at the maximal value at $k=e$ or $k=l$, which are both $\dim(X)+l(e-l)$. Thus the inequalities in \cref{eq:88} are equalities for all $l\geq 0$. 

  Finally, we notice that $\overline{D}_{\sigma,l}$ and $\mathrm{Gr}_X(\sigma,l)$ are both nonempty when $0\leq l\leq\max\{0,e\}$. Now assume that \cref{eq:89} holds for all $l\geq \max\{0,e\}$. Then when $l\geq \max\{0,e\}$, $D_{\sigma,l}$ is dense in $\overline{D}_{\sigma,l}$. On the other hand, as \begin{equation*}
    \dim(X)-k(k-e)+l(k-l), \quad k\geq \max\{l,e\}
  \end{equation*}
  achieves its maximal value at $k=l$, the union of fibers over $D_{\sigma,k}$ in $\mathrm{Gr}_X(\sigma,l)$ for all $k> l$ has strictly smaller dimension than $\dim(X)+l(e-l)$. Thus, $D_{\sigma,l}$ is also dense in $\mathrm{Gr}_X(\sigma,l)$. Hence
  $\mathrm{Gr}_X(\sigma,l)$ is empty if and only if $\overline{D}_{\sigma,l}$ is empty.
\end{proof}

\begin{proof}[Proof of \cref{thm:A}]
  It remains to prove that if \cref{eq:89} holds for all $l\geq \max\{0,e\}$, then $\mathrm{Inc}_X(\sigma,l_-,l_+)$ is a locally complete intersection variety of dimension 
  \begin{equation*}
   d_0= \dim(X) - l_-(l_- + e) - l_+(l_+ - e)+l_-l_+.
  \end{equation*}
  The reason is that its fiber over $D_{\sigma,k}$ is empty if  $k < \max\{l_- + e, l_+\}$
  and its dimension is 
  \begin{equation*}
    \dim(X) - k(k - e) + l_-(k -e- l_-) + l_+(k - l_+)
  \end{equation*}
  which is strictly decreasing when $k \geq \max\{l_- + e, l_+\}$. Thus the maximal value is achieved at $k = \max\{l_- + e, l_+\}$, which is exactly $d_0$. The irreducibility of $\mathrm{Inc}_X(\sigma,l_-,l_+)$ follows from the fact that it is endowed with a quasi-smooth derived scheme structure in \cite{jiang2022grassmanian} and its virtual dimension is also $d_0$ (see Appendix A of \cite{ddd} for more details).    
\end{proof}

\section{Degeneracy Loci on the Smooth Moduli Surface of Coherent Sheaves over a Surface}
\label{sec3}

  In this section, we prove \cref{theorem1.1}. We always assume \cref{assumA} for the remainder of this section. We first recall basic facts about the moduli space of stable sheaves. 
\begin{proposition}\label{existence}
  There exists a unique integer $n_0\in \mb{Z}$ such that $\mathcal{M}_H^{s}(r, c_1, n)$ is not empty if and only if $n\geq n_0$. 
\end{proposition}
\begin{proof}
 By the Bogomolov inequality, $\mc{M}_{H}^s(r,c_1,n)$ is empty for $n< \tfrac{1}{4}c_1^2$. On the other hand, by Theorem 1.1 of \cite{Li-Qin}, there exists a constant $\alpha$ such that $\mc{M}_H^s(r,c_1,n)$ is nonempty for all $n\geq \alpha$. Thus there exists a smallest integer $n_0$ such that $\mc{M}_H^s(r,c_1,n)$ is nonempty. Let $U\in \mc{M}_H^s(r,c_1,n_0)$ be a stable sheaf. Set $U_{n_0}:=U$, and inductively construct a sequence of coherent sheaves $U_{n_{0}}\supset U_{n_{0}+1}\supset U_{n_0+2},\ldots$ such that $U_{k}/U_{k+1}$ is a length-1 skyscraper sheaf for all $k\geq n_0$. By Proposition 5.5 of \cite{negut2017shuffle}, all $U_k$ are stable coherent sheaves.
\end{proof}
\begin{proposition}[Proposition 2.10, 2.14 of \cite{neguct2018hecke} and Lemma 3.3 of \cite{koseki2021categorical}]
  \label{const}
   When $n\geq n_0$, $\mathcal{M}_H^{s}(r, c_1, n)$
is a smooth projective variety of dimension $\beta + 2rn$. Moreover, the universal sheaf \( \mathcal{U}_n \) has a resolution of the form 
  \begin{equation*}
    0\to \mc{V}_n\xrightarrow{\sigma_n} \mc{W}_n\to \mc{U}_n\to 0,
  \end{equation*}
  where $\sigma_n$ is a morphism between the locally free sheaves $\mc{V}_n$ and $\mc{W}_n$ over $\mc{M}_H^{s}(r, c_1, n)\times S$. 
\end{proposition}

Given a closed point $o\in S$, let $\sigma_{n,o}$ be the restriction of $\sigma_n$ to $\mc{M}_H^s(r,c_1,n)\times \{o\}$, where $\sigma_{n}$ follows from \cref{const}.
\begin{proposition}
  \label{prop:1}
  Given integers $l,n$, we have 
   \begin{equation*}
    \mathrm{Gr}_{\mc{M}^{s}_{H}(r,c_1,n)}(\sigma_{n,o}^{\vee},l)\cong \mathrm{Gr}_{\mc{M}^{s}_{H}(r,c_1,n-l)}(\sigma_{n-l,o},l),
  \end{equation*}
  which is either empty or a smooth variety of dimension $\beta+2rn-l(l+r)$.
\end{proposition}
\begin{proof}
  It is actually a corollary of the study of stable perverse coherent sheaves on blowups of surfaces by Nakajima–Yoshioka \cite{1050282810787470592}. 
  
  Let $\hat{S}:=\mathrm{Bl}_oS$ and $p:\hat{S}\to S$ be the projection map.  Let $C:=p^{-1}(o)\cong \mb{P}^{1}$ be the exceptional divisor and $\mc{O}_C(-1):=\mc{O}(C)|_C$.  Let $[C]\in H^2(\hat{S},\mb{Z})$ be the fundamental class of $C$ in $\hat{S}$.

A stable perverse coherent sheaf $E$ on $\hat{S}$ with respect to $H$ is defined as a coherent sheaf such that
    \begin{equation*}
      \operatorname{Hom}(E,\mc{O}_{C}(-1))=0,\quad \operatorname{Hom}(\mc{O}_{C},E)=0
    \end{equation*}
    and $p_{*}E$ is Gieseker stable with respect to $H$. For any $n,l\in \mb{Z}$, we define $M^{0}(n,l)$ as the moduli space of perverse coherent sheaves with the total Chern class 
    \begin{equation*}
  v_{n,l}:=(r,p^*c_1-l[C],n-\frac{l^2+l}{2}).
\end{equation*}
Its corresponding Chern character is
\begin{equation*}
  p^*(r,c_1,\frac{1}{2}c_1^2-n)-l(ch(\mc{O}_C(-1))).
\end{equation*}

By Theorem 4.1 of \cite{1050282810787470592} (see also Theorem 4.1 of \cite{koseki2021categorical}), we have
 \begin{equation*}
    M^{0}(n,l)\cong \mathrm{Gr}_{\mc{M}^{s}_{H}(c)}(\sigma_{n,o}^{\vee},l),\quad  M^{0}(n,l)\cong \mathrm{Gr}_{\mc{M}^{s}_{H}(c-l[pts])}(\sigma_{n-l,o},l).
  \end{equation*}
By Lemma 3.22 of \cite{1050282810787470592} (see also Lemma 3.3 of \cite{koseki2021categorical}), $M^{0}(n,l)$ is either empty or a smooth variety of dimension $\beta+2rn-l(l+r)$. Hence we finish the proof of \cref{prop:1}.
\end{proof}

\begin{proof}[Proof of \cref{theorem1.1}] We choose a closed point $o\in S$. We first prove that $\overline{D}_{\sigma_{n,o},l}$ is nonempty if and only if $g(n,l)\geq 0$. When $l\geq r$, we notice that by \cref{thm:A} and \cref{prop:1},
  $\overline{D}_{\sigma_{n,o},l} = \overline{D}_{\sigma_{n,o}^{\vee},l-r}$ is not empty 
 if and only if $\overline{D}_{\sigma_{n-l+r,o},l-r}$ is not empty. When $l\geq r$, we also have 
\begin{equation*}
  g(n,l)=g(n-l+r,l-r).
\end{equation*}
Thus we can reduce to the case that $0\leq l<r$. In this case, $g(n,l)=n-n_0$ and $\overline{D}_{\sigma_{n,o},l}=\mc{M}_H^s(r,c_1,n)$, which is nonempty if and only if $n-n_0\geq 0$. Thus, we prove the argument. 

Next, we prove the argument that $\overline{D}_{\sigma_{n,o},l}$ is an irreducible Cohen-Macaulay variety of dimension $\beta+2rn-\max\{0,l(l-r)\}$ if $g(n,l)\geq 0$. It also directly follows from \cref{thm:A} and \cref{prop:1}.

Now we consider $\overline{D}_{\mc{U}_n,l}=\overline{D}_{\sigma_{n},l}$. We know that $\overline{D}_{\sigma_{n},l}$ is nonempty if and only if $\overline{D}_{\sigma_{n,o},l}$ is nonempty for some closed point $o\in S$. Hence it is equivalent to $g(n,l)\geq 0$. Now assume that $g(n,l)\geq 0$. Regarding $\overline{D}_{\sigma_{n},l}$ as a scheme over $S$, for any closed point $o\in S$, its fiber is $\overline{D}_{\sigma_{n,o},l}$ and has dimension $\beta+2rn-\max\{0,l(l-r)\}$. Hence we have 
  \begin{equation*}
    \dim(\overline{D}_{\sigma_{n},l}) = \beta+ 2rn - \max\{0, l(l-r)\}+2.
  \end{equation*}
   Thus, we finish the proof of \cref{theorem1.1}.
\end{proof}

\appendix
\section{Moduli Space of Framed Coherent Sheaves on the Plane and ADHM Construction}
\label{sec:appendixA}
Given integers $(r,n) \in \mathbb{Z}_{>0} \times \mathbb{Z}_{\geq 0}$, let
$\mathcal{M}_{fr}(r,n)$ be the moduli space of framed sheaves on the projective plane. It can also be interpreted as a Nakajima quiver variety: we consider two vector spaces $V$ and $W$ with $\dim(V)=n$ and $\dim(W)=r$. The ADHM datum for this setup is given by
\begin{equation*}
  \{(X,Y,i,j) \in \operatorname{Hom}(V,V) \oplus \operatorname{Hom}(V,V) \oplus \operatorname{Hom}(W,V) \oplus \operatorname{Hom}(V,W)\}
\end{equation*}
subject to the equation
\begin{equation*}
  [X,Y] + ij = 0,
\end{equation*}
An ADHM datum $(X,Y,i,j)$ is called stable if there is no non-zero subspace $V' \subset V$ closed under $X$ and $Y$ and contained in the kernel of $j$. Let $\mathrm{M}^{s}(V,W)$ be the space of stable ADHM data. The general linear group $\mathrm{GL}(V)$ acts on $\mathrm{M}^{s}(V,W)$ by
\begin{equation*}
  g \cdot (X,Y,i,j) = (g X g^{-1}, g Y g^{-1}, g i, j g^{-1}).
\end{equation*} 
\begin{theorem}[Chapter 2 of \cite{MR1711344}]We have
\begin{equation*} 
  \mathcal{M}_{fr}(r,n) \cong \mathrm{M}^{s}(V,W)/\mathrm{GL}(V),
\end{equation*}
Moreover, $\mathcal{M}_{fr}(r,n)$ is a non-empty smooth variety of dimension $2rn$.
\end{theorem}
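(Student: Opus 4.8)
The plan is to prove this by the classical ADHM construction, following Chapter~2 of \cite{MR1711344}, in three steps: the bijection $\mathcal{M}_{fr}(r,n) \cong M^{s}(V,W)/GL(V)$, the freeness of the $GL(V)$-action, and the smoothness and dimension count. Fix a line at infinity $\ell_{\infty} \subset \mathbb{P}^{2}$; a framed sheaf is a torsion-free sheaf $E$ on $\mathbb{P}^{2}$ with $\mathrm{rank}(E) = r$, $c_{2}(E) = n$, together with a trivialization $\phi \colon E|_{\ell_{\infty}} \xrightarrow{\sim} \mathcal{O}_{\ell_{\infty}}^{\oplus r}$. Given $(E,\phi)$, I would set $V := H^{1}(\mathbb{P}^{2}, E(-\ell_{\infty}))$ and $W := \mathbb{C}^{r}$; the standard cohomology-vanishing computations (using Serre duality on $\mathbb{P}^{2}$, torsion-freeness, and the framing) show $\dim V = n$ and, via the Beilinson spectral sequence, that $E$ is the cohomology of a monad
\begin{equation*}
  V \otimes \mathcal{O}_{\mathbb{P}^{2}}(-1) \xrightarrow{a} (V \oplus V \oplus W) \otimes \mathcal{O}_{\mathbb{P}^{2}} \xrightarrow{b} V \otimes \mathcal{O}_{\mathbb{P}^{2}}(1),
\end{equation*}
where, in coordinates $[z_{0}:z_{1}:z_{2}]$ with $\ell_{\infty} = \{z_{0} = 0\}$, the maps $a,b$ are assembled from linear maps $X,Y \in \operatorname{Hom}(V,V)$, $i \in \operatorname{Hom}(W,V)$, $j \in \operatorname{Hom}(V,W)$ by
\begin{equation*}
  a = \begin{pmatrix} z_{1} - z_{0}X \\ z_{2} - z_{0}Y \\ z_{0}j \end{pmatrix}, \qquad
  b = \begin{pmatrix} -(z_{2} - z_{0}Y) & z_{1} - z_{0}X & z_{0}i \end{pmatrix}.
\end{equation*}

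A direct expansion gives $b \circ a = z_{0}^{2}\big([X,Y] + ij\big)$, so the monad condition $b \circ a = 0$ is precisely the ADHM equation $[X,Y] + ij = 0$, while the remaining non-degeneracy of the monad needed for its middle cohomology to be a torsion-free framed sheaf of the right numerical type translates into the stated stability condition. Conversely, running the monad backwards produces a framed sheaf $(E,\phi)$ from any stable ADHM datum; I would check that the two constructions are mutually inverse, that changing the basis of $V$ is exactly the conjugation action of $GL(V)$, and then perform the whole construction in families to obtain an isomorphism of moduli functors, not merely a bijection of points.

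For the second and third steps, the stability condition forces the $GL(V)$-stabilizer of every point of $M^{s}(V,W)$ to be trivial: a stabilizing $\xi \in \mathfrak{gl}(V)$ commutes with $X$ and $Y$ and satisfies $j\xi = 0$, so $\operatorname{im}(\xi)$ is an $X,Y$-invariant subspace of $\ker j$, hence zero. One deduces that the $GL(V)$-action is free with closed orbits, so $M^{s}(V,W)/GL(V)$ exists as a quotient scheme. For the Zariski tangent space at $[(X,Y,i,j)]$ I would use the three-term complex
\begin{equation*}
  \operatorname{Hom}(V,V) \xrightarrow{d_{1}} \operatorname{Hom}(V,V)^{\oplus 2} \oplus \operatorname{Hom}(W,V) \oplus \operatorname{Hom}(V,W) \xrightarrow{d_{2}} \operatorname{Hom}(V,V),
\end{equation*}
with $d_{1}$ the derivative of the $GL(V)$-action and $d_{2}$ the derivative of $(X,Y,i,j) \mapsto [X,Y] + ij$; stability makes $d_{1}$ injective, and the symplectic self-duality of this complex (i.e. $d_{2}$ being the derivative of the moment map) then makes $d_{2}$ surjective. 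Hence the complex has cohomology only in the middle, of dimension $(2n^{2} + 2rn) - n^{2} - n^{2} = 2rn$ independent of the point, and this cohomology is the tangent space of the quotient; therefore $\mathcal{M}_{fr}(r,n)$ is smooth of pure dimension $2rn$ (equivalently, the obstruction $\operatorname{Ext}^{2}(E, E(-\ell_{\infty}))$ vanishes). Non-emptiness follows by exhibiting one framed sheaf, e.g. $E = I_{Z} \oplus \mathcal{O}_{\mathbb{P}^{2}}^{\oplus(r-1)}$ for a length-$n$ subscheme $Z \subset \mathbb{P}^{2} \setminus \ell_{\infty}$ with its evident trivialization along $\ell_{\infty}$.

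The main obstacle is the technical heart of the first step: showing that the non-degeneracy conditions on the monad maps $a$ and $b$ are equivalent to the single stated stability condition, and tracking exactly where $E$ is allowed to fail to be locally free. This is the content of Chapter~2 of \cite{MR1711344}, whose argument I would reproduce; granting it, the freeness, smoothness, and dimension statements follow formally from the ADHM complex as above.
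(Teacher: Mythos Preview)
The paper does not give its own proof of this statement at all: it is stated as a theorem attributed to Chapter~2 of \cite{MR1711344} and simply quoted as input. Your sketch is precisely the argument of that reference (Beilinson monad description, translation of the monad conditions into the ADHM equation plus the cyclicity/stability condition, freeness of the $GL(V)$-action via the deformation complex, and the dimension count from the Euler characteristic of that complex), so there is nothing to compare beyond noting that you are reproducing the cited source rather than anything original to the paper.
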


The vector spaces $V$ and $W$ induce tautological locally free sheaves on $\mc{M}_{fr}(r,n)$, denoted by $\mathcal{V}_n$ and $\mathcal{W}$. We consider the two-term complex
\begin{equation*}
  \mathcal{V}_n \xrightarrow{(X - x \cdot \text{id}) \oplus (Y - y \cdot \text{id}) \oplus j} \mathcal{V}_n \oplus \mathcal{V}_n \oplus \mathcal{W} \xrightarrow{(Y - y \cdot \text{id}) \oplus -(X - x \cdot \text{id}) \oplus i} \mathcal{V}_n,
\end{equation*}
for $(x,y) \in \mathbb{A}^{2}$. As the map
\begin{equation*}
  (X - x \cdot \text{id}) \oplus (Y - y \cdot \text{id}) \oplus j
\end{equation*}
is injective at every closed point, its cokernel, denoted by $\mathcal{T}_{n}$, is locally free. Moreover, we have a morphism of locally free sheaves
\begin{equation*}
  \sigma_{n}: \mathcal{V}_n \to \mathcal{T}_n,
\end{equation*}
where the (virtual) rank of $\sigma_n$ is $-r$. Let $\mc{U}_n$ be the universal sheaf $\mathcal{U}$ on $\mathcal{M}_{fr}(r,n) \times \mathbb{A}^{2}$. Then we have 
\begin{equation*}
  \mc{U}_n \cong \mathrm{coker}(\sigma_n^{\vee}).
\end{equation*}
Let $\sigma_{n,0}$ denote the restriction of $\sigma_n$ to $\mathcal{M}_{fr}(r,n) \times \{(0,0)\}$, obtained by setting $x = y = 0$.

The Grassmannian of $\mathcal{U}_n$ also has a quiver-like description due to 
 Nakajima–Yoshioka \cite{NYP1}. Let $V_{1}$, $V_{2}$, and $W$ be vector spaces of dimensions $n_{1}$, $n_{2}$, and $r$, respectively. We consider the datum $(B_{1}, B_{2}, d, i, j)$ in
\begin{equation*}
  \operatorname{Hom}(V_{1}, V_{2}) \oplus \operatorname{Hom}(V_{2}, V_{1}) \oplus \operatorname{Hom}(W, V_{1}) \oplus \operatorname{Hom}(V_{2}, W),
\end{equation*}
satisfying the equation
\begin{equation*}
  B_{1} d B_{2} - B_{2} d B_{1} = 0,
\end{equation*}
A datum $(B_{1}, B_{2}, d, i, j)$ is called stable if for any subspaces \begin{equation*}
  V_{1}' \subset V_{1}, \quad  V_{2}' \subset V_{2}, \quad 0 \subset W
\end{equation*}
which are closed under the quiver representation, we must have $V_{1}' = 0$. By the stability condition, $d$ is injective, implying $n_{1} \geq n_{2}$. Let $\mathrm{M}^{s}(V_{1}, V_{2}, W)$ be the space of stable data.
\begin{theorem}[Nakajima-Yoshioka \cite{NYP1}] 
The moduli space
\begin{equation*}
  \mathcal{M}_{\text{perv}}(n_{1}, n_{2}, r) := \mathrm{M}^{s}(V_{1}, V_{2}, W) / (\mathrm{GL}(V_{1}) \times \mathrm{GL}(V_{2}))
\end{equation*}
is either an irreducible smooth variety of dimension $2n_{1}n_{2} - n_{1}^{2} - n_{2}^{2} + (n_{1} + n_{2})r$ or an empty set. Moreover, we have
\begin{align*}
  \operatorname{Gr}_{\mathcal{M}_{fr}(n_{2}, r)}(\sigma_{n_2,0}^{\vee}, n_{1} - n_{2})\cong \mathcal{M}_{\text{perv}}(n_{1}, n_{2}, r) \cong \operatorname{Gr}_{\mathcal{M}_{fr}(n_{1}, r)}(\sigma_{n_1,0}, n_{1} - n_{2})
\end{align*}
\end{theorem}

Applying \cref{thm:A}, we conclude:
\begin{corollary}
  For any integers $r>0$, $n\geq 0$, $l\geq 0$, the degeneracy locus $\overline{D}_{\mc{U}_{n},l}$ over $\mc{M}_{fr}(r,n)\times \mb{A}^{2}$ is nonempty if and only if 
  \begin{equation*}
    \tilde{g}(n,l):=n-\frac{l(l-r)-t(t-r)
    }{2r}\geq 0,
  \end{equation*}
  where $t$ is the remainder when $l$ is divided by $r$. Moreover, if $\tilde{g}(n,l)\geq 0$, then $\overline{D}_{\mc{U}_{n},l}$ is an irreducible Cohen–Macaulay variety of dimension 
  $$2rn + 2 - \max\{0, l(r-l)-t(r-t)\}.$$

\end{corollary}
\begin{proof}
  The proof is the same as the proof of \cref{theorem1.1}, which we omit here.
\end{proof}

\end{document}